\newcommand{\C} {\ensuremath{\mathbb{C}}}
\newcommand{\OO}{\mathcal{O}}
\newcommand{\dq}{\overline{\partial}}
\DeclareMathOperator{\Sing}{Sing}
\DeclareMathOperator{\supp}{supp}
\renewcommand{\Im}{\mbox{Im }}
\newtheorem {satz} {Satz} [section]
\newtheorem {lem} [satz] {Lemma}
\newtheorem {thm} [satz] {Theorem}
\title[$L^2$-Serre duality on singular complex spaces] 
{$L^2$-Serre duality on singular complex spaces and applications}
\author{J. Ruppenthal}
\address{Department of Mathematics, University of Wuppertal, Gau{\ss}str. 20, 42119 Wuppertal, Germany.}
\email{ruppenthal@uni-wuppertal.de}
\date{\today}
\subjclass[2000]{32C35, 32C37, 32W05, 32J25}
\keywords{Cauchy-Riemann equations, $L^2$-theory, Serre duality, Dolbeault cohomology, vanishing theorems, singular complex spaces, rational singularities.}
\begin{document}

\begin{abstract} 
In this survey, we explain a version of topological $L^2$-Serre duality for singular complex spaces with arbitrary singularities.
This duality can be used to deduce various $L^2$-vanishing theorems for the $\dq$-equation on singular spaces.
As one application, we prove Hartogs' extension theorem for $(n-1)$-complete spaces.
Another application is the characterization of rational singularities.
It is shown that complex spaces with rational singularities behave quite tame with respect to some $\dq$-equation
in the $L^2$-sense. More precisely: a singular point is rational if and only if the appropriate $L^2$-$\dq$-complex is exact in this point.
So, we obtain an $L^2$-$\dq$-resolution of the structure sheaf in rational singular points.
\end{abstract}

\maketitle

\section{Introduction}

Classical Serre duality, \cite{Serre}, can be formulated as follows: 
Let $X$ be a complex $n$-dimensional manifold, let $V\to X$ be a complex vector bundle, 
and let $\mathcal{E}^{0,q}(X,V)$ and $\mathcal{E}^{n,q}_c(X,V^*)$ be the spaces of global 
smooth $(0,q)$-form with values in $V$ and global smooth compactly supported $(n,q)$-forms 
with values in the dual bundle $V^*$, respectively. Then the following pairing is non-degenerate
\begin{equation}\label{eq:1}
H^q\big(\mathcal{E}^{0,\bullet}(X,V), \bar{\partial}\big) \times
H^{n-q}\big(\mathcal{E}_c^{n,\bullet}(X,V^*), \bar{\partial}\big) \to \mathbb{C}, \quad
([\varphi]_{\bar{\partial}}, [\psi]_{\bar{\partial}}) \mapsto \int_X \varphi\wedge\psi
\end{equation}
provided that $H^q\big(\mathcal{E}^{0,\bullet}(X,V), \bar{\partial}\big)$ and 
$H^{q+1}\big(\mathcal{E}^{0,\bullet}(X,V), \bar{\partial}\big)$ are 
Hausdorff topological vector spaces.

If $X$ is allowed to have singularities, then, traditionally, Serre duality takes a more algebraic and much less explicit form.
To explain that more precisely, let $\mathcal{F}:=\OO(F)$, $\mathcal{F}^*:=\OO(F^*)$
and let $\Omega^n_X$ denote the sheaf of holomorphic $n$-forms on $X$.
Then we can rephrase \eqref{eq:1} via the Dolbeault isomorphism algebraically:
There is a non-degenerate topological pairing
\begin{eqnarray}\label{eq:2}
H^q(X,\mathcal{F}) \times H^{n-q}(X,\mathcal{F}^*\otimes \Omega^n_X) \rightarrow \C,
\end{eqnarray}
realized by the cup-product, provided that $H^q(X,\mathcal{F})$ and $H^{q+1}(X,\mathcal{F})$ are Hausdorff.
In this formulation, Serre duality has been generalized to singular complex spaces, see,
e.g., Hartshorne \cite{Ha1}, \cite{Ha2} and Conrad \cite{C} for the algebraic setting and Ramis-Ruget \cite{RR}
and Andreotti-Kas \cite{AK} for the analytic setting.
In fact, if $X$ is of pure dimension $n$, paracompact and Cohen-Macaulay,
then there is again a non-degenerate topological pairing \eqref{eq:2} if we replace $\Omega^n_X$ 
by the Grothendieck dualizing sheaf $\omega_X$.
If $X$ is not Cohen-Macaulay, then $H^{n-q}(X,\mathcal{F}^*\otimes\Omega^n_X)$ has to be the replaced
by the cohomology of a certain complex of $\OO_X$-modules, called a dualizing complex.

\medskip
In this survey, we will explain how $L^2$-theory for the $\dq$-operator can be used to obtain
an $L^2$-version of Serre duality on singular spaces which has an analytic realization completely analogous to \eqref{eq:1}.
More precisely, we will show how \eqref{eq:1} generalizes to singular spaces
by replacing the Dolbeault cohomology groups of smooth $(0,q)$ and $(n,q)$-forms, respectively,
by $L^2$-Dolbeault cohomology groups.

\medskip
\section{$L^2$-theory for the $\dq$-operator on singular spaces}

The Cauchy-Riemann operator $\dq$ plays a fundamental role in Complex Analysis and Complex Geometry.
On complex manifolds, functions --  or more generally distributions -- are holomorphic if and only if they are in the kernel of the $\dq$-operator,
and the same holds in a certain sense on normal complex spaces.
For forms of arbitrary degree, the importance of the $\dq$-operator appears strikingly for example in the notion of $\dq$-cohomology
which can be used to represent the cohomology of complex manifolds by the Dolbeault isomorphism.

The $L^2$-theory for the $\dq$-operator is of particular importance in Complex Analysis and Geometry
and has become indispensable for the subject after the fundamental work of
H\"ormander on $L^2$-estimates and existence theorems for the $\dq$-operator \cite{Hoe1}
and the related work of Andreotti and Vesentini \cite{AnVe}.
Important applications of the $L^2$-theory are e.g. the Ohsawa-Takegoshi extension theorem \cite{OT},
Siu's analyticity of the level sets of Lelong numbers \cite{Siu0}
or the invariance of plurigenera \cite{Siu} -- just to name some.

\medskip
The first problem one has to face when studying the $\dq$-equation on singular spaces
is that it is not clear what kind of differential forms and operators one should consider.
Recently, there has been considerable progress by different approaches.

Andersson and Samuelsson developed in \cite {AS} Koppelman integral formulas for the $\dq$-equation on arbitrary singular complex spaces
which allow for a $\dq$-resolution of the structure sheaf in terms of certain fine sheaves of currents, called $\mathcal{A}$-sheaves.
These $\mathcal{A}$-sheaves are defined by an iterative procedure of repeated application of singular integral operators,
which makes them pretty abstract and hard to understand (and difficult to work with in concrete situations).

A second, more explicit approach is as follows: Consider
differential forms which are defined on the regular part of a singular variety
and which are square-integrable up to the singular set.
This setting seems to be very fruitful and has some history by now (see \cite{PS1}).\footnote{
The interest in this setting goes back to the invention of intersection (co-)homology by Goresky and MacPherson which has very tight connections
to the $L^2$-deRham cohomology of the regular part of a singular variety. 
We refer here to the solution of the Cheeger-Goresky-MacPherson conjecture \cite{CGM}
for varieties with isolated singularities by Ohsawa \cite{Oh1} (see \cite{PS1} for more details).}
Also in this direction, considerable progress has been made recently.
{\O}vrelid--Vassiliadou and the author obtained in \cite{OV3} and \cite{Rp8} a pretty complete description of the $L^2$-cohomology
of the $\dq$-operator (in the sense of distributions) at isolated singularities.

In this setting, we understand the class of objects with which we deal very well (just $L^2$-forms), 
but the disadvantage is a different one. Whereas the $\dq$-equation is locally solvable for closed $(0,q)$-forms
in the category of $\mathcal{A}$-sheaves by the Koppelman formulas in \cite{AS}, 
there are local obstructions to solving the $\dq$-equation in the $L^2$-sense at singular points (see e.g. \cite{FOV2},  \cite{OV3}, \cite{Rp8}).
So, there can be no $L^2$-$\dq$-resolution for the structure sheaf in general.

In this survey, we will see that the $\dq$-operator in the $L^2$-sense
behaves pretty well on spaces with canonical singularities which play a prominent role in the minimal model program.
The underlying idea is that canonical Gorenstein singularities are rational (see e.g. \cite{Kol}, Theorem 11.1),
i.e., we expect that the singularities do not contribute to the local cohomology.

Pursuing this idea, it turned out that there is a notion of $L^2$-$\dq$-cohomology for $(0,q)$-forms
which can be described completely in terms of a resolution of singularities (see \eqref{eq14} below).
A singular point is rational if and only if this certain $L^2$-$\dq$-complex is exact in this point.
If the underlying space has rational singularities, particularly on a Gorenstein space with canonical singularities,
then we obtain an $L^2$-$\dq$-resolution of the structure sheaf,
i.e., a resolution of the structure sheaf in terms of a well-known and easy to handle class of differential forms.
One of our main tools is a version of topological $L^2$-Serre duality for singular complex spaces with arbitrary singularities,
which seems to be useful in other contexts, too (Theorem \ref{thm:main1}).

\medskip
\section{Two $\dq$-complexes on singular complex spaces}

We need to specify what we mean by differential forms and the $\dq$-operator in the presence of singularities.
Let $X$ be a Hermitian complex space\footnote{A Hermitian complex space $(X,g)$ is a reduced complex space $X$ with a metric $g$ on the regular part
such that the following holds: If $x\in X$ is an arbitrary point there exists a neighborhood $U=U(x)$ and a
biholomorphic embedding of $U$ into a domain $G$ in $\C^N$ and an ordinary smooth Hermitian metric in $G$
whose restriction to $U$ is $g|_U$.}
of pure dimension $n$ and $F\rightarrow X$ a Hermitian holomorphic line bundle.
We denote by $\mathcal{L}^{p,q}(F)$ the sheaf of germs of $F$-valued $(p,q)$-forms on the regular part of $X$
which are square-integrable on $K^*=K\setminus\Sing X$ for any compact set $K$ in their domain of definition.\footnote{This is what we mean by
square-integrable up to the singular set.}
Note that $\mathcal{L}^{p,q}(F)$ becomes a Fr\'echet sheaf with the $L^{2,loc}$-topology on open subsets of $X$.

Due to the incompleteness of the metric on $X^*=X\setminus \Sing X$, there are different reasonable definitions
of the $\dq$-operator on $\mathcal{L}^{p,q}(F)$-forms. To be more precise, let $\dq_{cpt}$ be the $\dq$-operator
on smooth forms with support away from the singular set $\Sing X$.
Then $\dq_{cpt}$ can be considered as a densely defined operator $\mathcal{L}^{p,q}(F) \rightarrow \mathcal{L}^{p,q+1}(F)$.
One can now consider various closed extensions of this operator. The two most important are the maximal closed extension,
i.e., the $\dq$-operator in the sense of distributions which we denote by $\dq_w$,
and the minimal closed extension, i.e., the closure of the graph of $\dq_{cpt}$ which we denote by $\dq_s$.
Let $\mathcal{C}^{p,q}(F)$ be the domain of definition of $\dq_w$ which is a subsheaf of $\mathcal{L}^{p,q}(F)$,
and $\mathcal{F}^{p,q}(F)$ the domain of definition of $\dq_s$ which in turn is a subsheaf of $\mathcal{C}^{p,q}(F)$.
We obtain complexes of fine sheaves
\begin{eqnarray}\label{eq:intro1}
\mathcal{C}^{p,0}(F) \overset{\dq_w}{\longrightarrow} \mathcal{C}^{p,1}(F) \overset{\dq_w}{\longrightarrow} 
\mathcal{C}^{p,2}(F) \overset{\dq_w}{\longrightarrow} ...
\end{eqnarray}
and
\begin{eqnarray}\label{eq:intro2}
\mathcal{F}^{p,0}(F) \overset{\dq_s}{\longrightarrow} \mathcal{F}^{p,1}(F) \overset{\dq_s}{\longrightarrow} \mathcal{F}^{p,2}(F) \overset{\dq_s}{\longrightarrow} ...
\end{eqnarray}

We refer to \cite{Rp11} for more details, but let us explain the $\dq_s$-operator more precisely for convenience of the reader.
Let $f$ be a germ in $\mathcal{C}^{p,q}(F)$, i.e., an $F$-valued $(p,q)$-form on an open set $U$ in $X$ (living on the regular part of $U$)
which is $L^2$ on compact subsets of $U$ and such that the $\dq$ in the sense of distributions, $\dq_w f$, is in the same class of forms.
Then $f$ is in the domain of the $\dq_s$-operator (and we set $\dq_s f=\dq_w f$) if there exists a sequence of forms $\{f_j\}_j \subset \mathcal{C}^{p,q}(U,F)$
with support away from the singular set, $\supp f_j\cap \Sing X=\emptyset$, such that
\begin{eqnarray*}
f_j \rightarrow f  &\mbox{ in }& \mathcal{L}^{p,q}(U,F),\\
\dq_w f_j \rightarrow \dq_w f  &\mbox{ in }& \mathcal{L}^{p,q+1}(U,F).
\end{eqnarray*}
This means that the $\dq_s$-operator comes with a certain Dirichlet boundary at the singular set of $X$,
which can also be interpreted as a growth condition. We have e.g. the following:

\begin{lem}[\cite{Rp11}]\label{lemma}
Bounded forms in the domain of $\dq_w$ are in the domain of $\dq_s$.
\end{lem}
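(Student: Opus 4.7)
The plan is the standard cutoff argument near the singular set, exploiting that $\Sing X$ is a proper analytic subset and that the form $f$ is assumed to be bounded.

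Let $f\in \mathcal{C}^{p,q}(U,F)$ be bounded on $U$ with respect to the Hermitian metric. To show $f\in\mathrm{Dom}(\dq_s)$, I will construct a sequence $f_\epsilon$ of $(p,q)$-forms with $\supp f_\epsilon\cap\Sing X=\emptyset$ converging to $f$ in $\mathcal{L}^{p,q}(U,F)$ and with $\dq_w f_\epsilon\to\dq_w f$ in $\mathcal{L}^{p,q+1}(U,F)$; a subsequent mollification on the complex manifold $X^*=X\setminus\Sing X$ then yields smooth approximants as required in the definition of $\dq_s$.

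The key step is the construction of appropriate cutoff functions. Since $X$ is locally embedded in some $\C^N$ and $\Sing X$ is a proper analytic subset of $X$, there exists a holomorphic function $h$ on (a relatively compact neighborhood in) $U$ whose zero set contains $\Sing X$. Using the fact that analytic sets have real codimension at least $2$, one constructs (via a logarithmic cutoff, e.g.\ $\chi_\epsilon = \chi(\log|\log|h||/\log|\log\epsilon|)$ for a suitable smooth $\chi:\R\to[0,1]$ with $\chi\equiv 0$ near $-\infty$ and $\chi\equiv 1$ near $+\infty$) a family of smooth cutoffs $\chi_\epsilon:U\to[0,1]$ with
\[
\chi_\epsilon\equiv 0 \text{ in a neighborhood of } \Sing X,\quad \chi_\epsilon\to 1 \text{ pointwise on } U^*,\quad \|\dq\chi_\epsilon\|_{L^2(K)}\to 0
\]
for every compact $K\subset U$. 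This is the heart of the proof; it works precisely because $\Sing X$ sits in real codimension at least $2$ in $U$, so its (logarithmic) capacity vanishes.

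Setting $f_\epsilon := \chi_\epsilon f$, we have $\supp f_\epsilon\cap\Sing X=\emptyset$, and $f_\epsilon\to f$ in $\mathcal{L}^{p,q}(U,F)$ by dominated convergence since $|f_\epsilon|\le |f|\in L^2_{loc}$. The Leibniz rule (valid in the distributional sense because $\chi_\epsilon$ is smooth and $f$ is locally $L^2$) gives
\[
\dq_w f_\epsilon \;=\; \dq\chi_\epsilon\wedge f \;+\; \chi_\epsilon\,\dq_w f .
\]
The second term converges to $\dq_w f$ in $\mathcal{L}^{p,q+1}(U,F)$ by dominated convergence. For the first term we use the boundedness of $f$: on any compact $K\subset U$,
\[
\|\dq\chi_\epsilon\wedge f\|_{L^2(K)} \;\le\; \|f\|_{L^\infty(K^*)}\,\|\dq\chi_\epsilon\|_{L^2(K)} \;\longrightarrow\; 0.
\]
This is exactly the point where the hypothesis that $f$ be bounded is used, and without it the error term in general does not disappear. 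Finally, mollifying each $f_\epsilon$ on the smooth manifold $X^*$ with a mollification parameter $\ll \mathrm{dist}(\supp f_\epsilon,\Sing X)$ produces smooth $(p,q)$-forms with support away from $\Sing X$ whose $L^2$-limit realizes $f$ and whose $\dq$-limit realizes $\dq_w f$, proving that $f\in\mathrm{Dom}(\dq_s)$ with $\dq_s f=\dq_w f$.

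The main obstacle is really just the construction of cutoffs with $L^2$-small differential; once this is available, the bound on $f$ immediately kills the commutator term and the rest is routine dominated convergence and mollification.
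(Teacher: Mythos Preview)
The paper does not give its own proof of this lemma; it merely cites \cite{Rp11}. Your argument is correct and is exactly the standard one that the citation points to: cut off near $\Sing X$ with logarithmic cutoffs $\chi_\epsilon$ whose differentials have vanishing $L^2$-norm (available because $\Sing X$ has real codimension $\ge 2$ in $X$), and use the boundedness of $f$ to kill the commutator term $\dq\chi_\epsilon\wedge f$.

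One minor remark: according to the description of $\dq_s$ given in the paper, the approximating sequence is only required to lie in $\mathcal{C}^{p,q}(U,F)$ with support disjoint from $\Sing X$, not to be smooth; so the final mollification step you include is not strictly needed (though it is harmless, and is required if one instead uses the equivalent description of $\dq_s$ as the graph closure of $\dq_{cpt}$ acting on smooth forms).
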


If $F$ is just the trivial line bundle, then $\mathcal{K}_X:=\ker\dq_w \subset \mathcal{C}^{n,0}$ is the canonical sheaf of Grauert--Riemenschneider (see \cite{GrRie})
and $\mathcal{K}_X^s:=\ker\dq_s \subset \mathcal{F}^{n,0}$ is the sheaf of holomorphic $n$-forms with Dirichlet boundary condition
that was introduced in \cite{Rp8}.
We will see below that $\widehat{\OO}_X=\ker\dq_s \subset \mathcal{F}^{0,0}$ for the sheaf of weakly holomorphic functions $\widehat{\OO}_X$.

It is clear that \eqref{eq:intro1} and \eqref{eq:intro2} are exact in regular points of $X$.
Exactness in singular points is equivalent to the difficult problem of solving $\dq$-equations locally in the $L^2$-sense at singularities,
which is not possible in general (see e.g. \cite{FOV2}, \cite{OV1}, \cite{OV3}, \cite{Rp1}, \cite{Rp7}, \cite{Rp8}).
However, it is known that \eqref{eq:intro1} is exact for $p=n$ (see \cite{PS1}),
and that \eqref{eq:intro2} is exact for $p=n$ if $X$ has only isolated singularities
(see \cite{Rp8}).
In these cases, the complexes \eqref{eq:intro1} and \eqref{eq:intro2} are fine resolutions of the canonical sheaves $\mathcal{K}_X$ and $\mathcal{K}_X^s$,
respectively.

For an open set $\Omega\subset X$, we denote by $H^{p,q}_{w,loc}(\Omega,F)$ the cohomology of the complex \eqref{eq:intro1},
and by $H^{p,q}_{w,cpt}(\Omega,F)$ the cohomology of \eqref{eq:intro1} with compact support. Analogously, let $H^{p,q}_{s,loc}(\Omega,F)$
and $H^{p,q}_{s,cpt}(\Omega,F)$ be the cohomology groups of \eqref{eq:intro2}.
These $L^2$-cohomology groups inherit the structure of topological vector spaces,
which are locally convex Hausdorff spaces if the corresponding $\dq$-operators have closed range.\footnote{
Note that different Hermitian metrics lead to $\dq$-complexes which are equivalent on relatively compact subsets.
So, one can put any Hermitian metric on $X$ in many of the results below.}

\medskip
\section{$L^2$-Serre duality}

We can now formulate the $L^2$-version of \eqref{eq:1} for singular complex spaces:

\begin{thm}[{\bf Serre duality \cite{Rp11}}]\label{thm:main1}
Let $X$ be a Hermitian complex space of pure dimension $n$, $F\rightarrow X$ a Hermitian holomorphic line bundle, and let $0\leq p,q \leq n$.
If $H^{p,q}_{w,loc}(\Omega,F)$ and $H^{p,q+1}_{w,loc}(\Omega,F)$ are Hausdorff, then the mapping
\begin{eqnarray*}
\mathcal{L}^{p,q}(\Omega,F) \times \mathcal{L}^{n-p,n-q}_{cpt}(\Omega,F^*) \rightarrow \C\ \ ,\ (\eta,\omega) \mapsto \int_{\Omega^*} \eta\wedge\omega,
\end{eqnarray*}
induces a non-degenerate pairing of topological vector spaces
\begin{eqnarray*}
H^{p,q}_{w,loc}(\Omega,F) \times H^{n-p,n-q}_{s,cpt}(\Omega,F^*) \rightarrow \C
\end{eqnarray*}
such that $H^{n-p,n-q}_{s,cpt}(\Omega,F^*)$ is the topological dual of $H^{p,q}_{w,loc}(\Omega,F)$ and vice versa.

The same statement holds with the indices $\{s, w\}$ in place of $\{w, s\}$. Then there is a non-degenrate pairing
\begin{eqnarray*}
H^{p,q}_{s,loc}(\Omega,F) \times H^{n-p,n-q}_{w,cpt}(\Omega,F^*) \rightarrow \C.
\end{eqnarray*}
\end{thm}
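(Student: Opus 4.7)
The plan is to deduce the theorem by combining two essentially independent ingredients: a formal adjointness of $\dq_w$ and $\dq_s$ under wedge--integration, and the abstract duality theorem for dual complexes of locally convex spaces with closed range. This is the same overall scheme as in Serre's classical argument \cite{Serre}, with the smooth Dolbeault complex replaced by the $L^2$-complexes \eqref{eq:intro1} and \eqref{eq:intro2}, and the smooth/compactly supported duality replaced by $L^{2,loc}$/$L^2_{cpt}$ duality via the Hodge star.

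The first step is to topologise the relevant spaces: $\mathcal{L}^{p,q}(\Omega,F)$ carries its natural Fr\'echet $L^{2,loc}$-topology, while $\mathcal{L}^{n-p,n-q}_{cpt}(\Omega,F^*)$ is endowed with the strict inductive-limit topology associated with an exhaustion of $\Omega$ by compact subsets. Via the Hodge star, the pairing $(\eta,\omega)\mapsto\int_{\Omega^*}\eta\wedge\omega$ then identifies each of these spaces with the strong topological dual of the other. The crucial analytic point is the formal adjointness identity
\begin{eqnarray*}
\int_{\Omega^*}\dq_w\eta\wedge\omega \;=\; (-1)^{p+q+1}\int_{\Omega^*}\eta\wedge\dq_s\omega
\end{eqnarray*}
for $\eta$ in $\mathrm{dom}\,\dq_w$ and $\omega$ in $\mathrm{dom}\,\dq_s$ of complementary bidegree. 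Classical integration by parts gives this immediately if $\omega$ is smooth and compactly supported in $\Omega^*=\Omega\setminus\Sing X$; in general $\omega$ is by definition an $L^2$-limit of such $\omega_j$ with $\dq\omega_j\to\dq_s\omega$ in $L^2$, all supported in a fixed compact set on which both $\eta$ and $\dq_w\eta$ are $L^2$, so the identity survives the passage to the limit.

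Armed with adjointness, I would invoke the functional-analytic principle (essentially Lemme 3 of \cite{Serre}) that for a closed densely defined operator $T$ between suitable locally convex spaces, closed range of $T$ is equivalent to closed range of its dual $T'$, in which case $\ker T' \cong (\mathrm{coker}\,T)'$ and $\mathrm{coker}\,T' \cong (\ker T)'$ topologically. Applying this to $\dq_w$ in bidegrees $(p,q-1)\to(p,q)$ and $(p,q)\to(p,q+1)$: the Hausdorff hypotheses on $H^{p,q}_{w,loc}$ and $H^{p,q+1}_{w,loc}$ are exactly closed range of $\dq_w$ in those degrees, and these are transferred by adjointness to closed range of $\dq_s$ on the compactly supported side, automatically yielding Hausdorffness of $H^{n-p,n-q}_{s,cpt}(\Omega,F^*)$ together with the desired topological duality realised by wedge--integration. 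The second half of the theorem (with $s$ and $w$ swapped) follows by the symmetric argument, as the roles of minimal and maximal extensions are completely symmetric under the Hodge star pairing.

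The main obstacle is the adjointness step: no boundary contribution from $\Sing X$ must enter the integration by parts. This is precisely what the Dirichlet approximation built into the definition of $\dq_s$ is designed to rule out --- the $\omega_j$ satisfy $\supp\omega_j\cap\Sing X=\emptyset$ --- and Lemma \ref{lemma} is in the same spirit, its proof producing cut-offs $\chi_\varepsilon$ about $\Sing X$ with $\|\dq\chi_\varepsilon\wedge\omega\|_{L^2}\to 0$. Verifying that such approximations can always be arranged on an arbitrary Hermitian complex space, and in particular that they do not spoil the convergence of the $\dq_w$-side, is the technical crux of the argument.
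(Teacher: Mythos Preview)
Your proposal is correct and follows essentially the same approach as the paper: the survey does not give a full proof but identifies precisely the two difficulties you address, namely handling closed densely defined operators between the Fr\'echet spaces $\mathcal{L}^{p,q}(\Omega,F)$ and the $(LF)$-spaces $\mathcal{L}^{n-p,n-q}_{cpt}(\Omega,F^*)$ via an abstract closed-range duality principle, and establishing that $\dq_w$ and $\dq_s$ are topologically dual even at the singularities. Your identification of the adjointness step (and its proof via the approximation built into $\dq_s$) as the technical crux matches the paper's emphasis exactly.
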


\smallskip
If the topological vector spaces $H^{p,q}_{w/s,loc}(\Omega,F)$, $H^{p,q+1}_{w/s,loc}(\Omega,F)$ are non-Hausdorff,
then the statement of Theorem \ref{thm:main1} holds at least for the
separated cohomology groups $\overline{H}_{w/s} = \ker \dq_{w/s} / \overline{\Im \dq_{w/s}}$.\footnote{The notation $w/s$ refers
either to the index $w$ or the index $s$ in the whole statement.}
The two main difficulties in the proof of Theorem \ref{thm:main1} are as follows. First, the $\dq$-operators under consideration are just
closed densely defined operators in the Fr\'echet spaces $\mathcal{L}^{p,q}(\Omega,F)$ and the $(LF)$-spaces $\mathcal{L}^{n-p,n-q}_{cpt}(\Omega,F^*)$.
Second, we have to show that the operators $\dq_w$ and $\dq_s$ are topologically dual, even at singularities.
Note that $H^{p,q}_{w/s,loc}(\Omega,F)$ is Hausdorff if and only if $\dq_{w/s}$ has closed range in $\mathcal{L}^{p,q}(\Omega,F)$,
and to decide whether this is the case is usually as difficult as solving the corresponding $\dq$-equation.
Using local $L^2$-$\dq$-solution results for singular spaces, one can show at least:

\begin{thm}[{\bf \cite{Rp11}}]\label{thm:main2}
Let $X$ be a Hermitian complex space of pure dimension $n$, $F\rightarrow X$ a Hermitian holomorphic line bundle, and let $0\leq p,q \leq n$.
Let $\Omega \subset X$ be a holomorphically convex open subset. Then the topological vector spaces
$$H^{n,q}_{w,loc}(\Omega,F)\ \ ,\ \ H^{n,q}_{w,cpt}(\Omega,F)\ \ ,\ \ H^{0,n-q}_{s,cpt}(\Omega,F^*)\ \ ,\ \ H^{0,n-q}_{s,loc}(\Omega,F^*)$$
are Hausdorff for all $0\leq q\leq n$.
\end{thm}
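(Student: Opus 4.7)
The plan is to reduce Hausdorffness of each of the four cohomology groups to a closed-range statement for the corresponding $\dq$-operator, and then handle the four groups in two stages. First I would establish Hausdorffness of $H^{n,q}_{w,loc}(\Omega,F)$ and $H^{n,q}_{w,cpt}(\Omega,F)$ by sheaf-theoretic means, exploiting that the complex \eqref{eq:intro1} is a fine resolution of the Grauert--Riemenschneider canonical sheaf in the case $p=n$. Then, having these in hand for all degrees, I would invoke Theorem \ref{thm:main1} to transfer Hausdorffness to the two $\dq_s$-groups for $(0,n-q)$-forms on $F^*$.

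For the first stage, the key input is that $(\mathcal{C}^{n,\bullet}(F),\dq_w)$ is exact at every point (the case $p=n$ of the discussion after \eqref{eq:intro2}, see \cite{PS1}) and consists of fine sheaves, hence forms a fine resolution of the twisted canonical sheaf $\mathcal{K}_X\otimes F$. This yields (topological) Dolbeault-type identifications
\[
H^{n,q}_{w,loc}(\Omega,F)\ \cong\ H^{q}(\Omega,\mathcal{K}_X\otimes F),\qquad H^{n,q}_{w,cpt}(\Omega,F)\ \cong\ H^{q}_{c}(\Omega,\mathcal{K}_X\otimes F).
\]
Since $\mathcal{K}_X$ is coherent and $\Omega$ is holomorphically convex, its Remmert reduction $R\colon\Omega\to S$ is proper with compact connected fibres onto a Stein space $S$; by Grauert's direct image theorem the sheaves $R^{j}R_{*}(\mathcal{K}_X\otimes F)$ are coherent on $S$, and Cartan's Theorem B combined with the Leray spectral sequence identifies $H^{q}(\Omega,\mathcal{K}_X\otimes F)$ with a space of global sections of a coherent analytic sheaf on the Stein space $S$, which is a Fr\'echet space and in particular Hausdorff. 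The compactly supported variant I would treat either by pushing down to $S$ and applying a Serre-type duality for coherent sheaves on Stein spaces, or by a direct Andreotti--Vesentini closed-range argument on a strictly plurisubharmonic exhaustion of $\Omega$.

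Once these first two groups are Hausdorff for every $q$, the remaining two groups follow immediately from Theorem \ref{thm:main1}. Indeed, Hausdorffness of $H^{n,q}_{w,loc}(\Omega,F)$ at $q$ and at $q+1$ activates the pairing in Theorem \ref{thm:main1}, which identifies $H^{0,n-q}_{s,cpt}(\Omega,F^*)$ as the topological dual of $H^{n,q}_{w,loc}(\Omega,F)$; the (strong) dual of a separated locally convex space is automatically Hausdorff. By the same token, Hausdorffness of $H^{n,q}_{w,cpt}(\Omega,F)$ at two consecutive degrees delivers Hausdorffness of $H^{0,n-q}_{s,loc}(\Omega,F^*)$ as the corresponding dual.

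The main obstacle I anticipate is the compactly supported case $H^{q}_{c}(\Omega,\mathcal{K}_X\otimes F)$ on a holomorphically convex space: this is less standard than ordinary coherent cohomology, and because $\mathcal{L}^{n,q}_{cpt}(\Omega,F)$ is an $(LF)$-space rather than a Fr\'echet space, one must track the topology through the Dolbeault-type isomorphism with real care and check that it is a \emph{topological} isomorphism, so that closedness of $\Im\dq_w$ transfers to closedness of the sheaf-cohomological differential and vice versa. The parallel verification that the duality of Theorem \ref{thm:main1} is compatible with these topologies at the level of $(LF)$- versus Fr\'echet-duals is the other delicate technical point.
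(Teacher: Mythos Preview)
Your overall strategy matches the route the paper indicates (it does not give a full proof here but refers to \cite{Rp11}): identify the $L^2$-cohomology $H^{n,q}_{w,loc/cpt}(\Omega,F)$ with $H^{q}_{(c)}(\Omega,\mathcal{K}_X\otimes F)$ via the fine resolution $(\mathcal{C}^{n,\bullet}(F),\dq_w)$, and then use coherence of $\mathcal{K}_X$ together with holomorphic convexity of $\Omega$. The paper, however, singles out as the \emph{main} technical point precisely what you list only as an ``anticipated obstacle'': one must prove that the canonical Fr\'echet structure of compact convergence on the coherent sheaf $\mathcal{K}_X$ coincides with the $L^{2,loc}$-Fr\'echet structure inherited from $\mathcal{L}^{n,0}$, so that the abstract de Rham isomorphism between \v{C}ech cohomology and $L^2$-cohomology is a \emph{topological} isomorphism. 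Without this, Hausdorffness of $H^{q}(\Omega,\mathcal{K}_X\otimes F)$ does not transfer to $H^{n,q}_{w,loc}(\Omega,F)$; this is the heart of the argument, not a side issue.

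There is also a genuine imprecision in your second stage. Deducing Hausdorffness of $H^{0,n-q}_{s,loc}(\Omega,F^*)$ from that of $H^{n,q}_{w,cpt}(\Omega,F)$ by ``invoking Theorem~\ref{thm:main1}'' is circular as the theorem is stated: in the $\{s,w\}$-version the Hausdorff hypothesis is imposed on the $s,loc$ groups, which is exactly what you want to conclude. What actually works here is the closed range theorem that underlies the proof of Theorem~\ref{thm:main1}: the operators $\dq_w$ acting on $\mathcal{L}^{n,\bullet}_{cpt}(\Omega,F)$ and $\dq_s$ acting on $\mathcal{L}^{0,\bullet}_{loc}(\Omega,F^*)$ are topological transposes of one another under the pairing $(\eta,\omega)\mapsto\int_{\Omega^*}\eta\wedge\omega$, so closed range of one in a given degree is equivalent to closed range of the other in the dual degree. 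You should appeal to this directly rather than to the packaged statement of Theorem~\ref{thm:main1}. The deduction for $H^{0,n-q}_{s,cpt}(\Omega,F^*)$ from $H^{n,q}_{w,loc}(\Omega,F)$, on the other hand, is fine as you wrote it.
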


A main point in the proof of Theorem \ref{thm:main2} is to show that the canonical Fr\'echet sheaf structure
of compact convergence on the coherent analytic canonical sheaf $\mathcal{K}_X$
coincides with the Fr\'echet sheaf structure of $L^2$-convergence on compact subsets.
This allows then to show also the topological equivalence of \v{C}ech cohomology and $L^2$-cohomology.
If $X$ has only isolated singularities, then the Hausdorff property is known also for some cohomology spaces 
of different degree (see \cite{Rp11}).

As a direct application of Serre duality, Theorem \ref{thm:main1}, one can deduce:

\begin{thm}\label{thm:main3}
Let $X$ be a Hermitian complex space of pure dimension $n$, $F\rightarrow X$ a Hermitian holomorphic line bundle
and $\Omega\subset X$ a cohomologically $q$-complete open subset, $q\geq 1$. Then
\begin{eqnarray*}
H^{n,r}_{w,loc}(\Omega,F) = H^{0,n-r}_{s,cpt}(\Omega,F^*)=0\ \ \ \mbox{ for all }\  r\geq q.
\end{eqnarray*}
\end{thm}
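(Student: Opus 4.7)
The plan is to combine two ingredients already present in the paper: the fact that the complex \eqref{eq:intro1} at $p=n$ is a fine resolution of the twisted canonical sheaf, and the $L^2$-Serre duality in Theorem \ref{thm:main1}. The first vanishing will fall out from cohomological $q$-completeness applied to a coherent sheaf, and the second will be extracted from the first by duality, essentially for free.

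First I would invoke the resolution statement cited just before Theorem \ref{thm:main2}: the complex of fine sheaves $(\mathcal{C}^{n,\bullet}(F),\dq_w)$ is exact in positive degrees and has $\ker \dq_w = \mathcal{K}_X \otimes \mathcal{O}(F)$, i.e., it is a fine resolution of the twisted Grauert--Riemenschneider canonical sheaf (the $F$-twist reduces to the scalar case of \cite{PS1} by local trivialization). Standard abstract nonsense then gives a canonical isomorphism
\begin{equation*}
H^{n,r}_{w,loc}(\Omega,F) \;\cong\; H^r\bigl(\Omega,\mathcal{K}_X \otimes \mathcal{O}(F)\bigr).
\end{equation*}
Since $\mathcal{K}_X$ is coherent by Grauert--Riemenschneider and $\Omega$ is cohomologically $q$-complete, the right-hand side vanishes for every $r \geq q$. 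This settles the first half of the theorem.

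For the second half, fix any $r \geq q$. By what we just proved, both $H^{n,r}_{w,loc}(\Omega,F)$ and $H^{n,r+1}_{w,loc}(\Omega,F)$ are zero (note $r+1 \geq q$ as well), hence trivially Hausdorff. The hypotheses of Theorem \ref{thm:main1} at bidegree $(n,r)$ are therefore satisfied, yielding a non-degenerate topological pairing
\begin{equation*}
H^{n,r}_{w,loc}(\Omega,F) \times H^{0,n-r}_{s,cpt}(\Omega,F^*) \;\longrightarrow\; \C
\end{equation*}
that identifies each factor with the topological dual of the other. Non-degeneracy combined with the vanishing of the first factor forces the second to vanish as well.

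The proof is essentially a formal consequence of the two inputs cited, so the main obstacle is not in this theorem but is packaged inside Theorem \ref{thm:main1}. The only subtlety worth checking explicitly is that the Hausdorff hypothesis in Serre duality is required for \emph{both} $H^{n,r}_{w,loc}$ and $H^{n,r+1}_{w,loc}$; this is automatic here precisely because the vanishing proved in the first step applies to both indices.
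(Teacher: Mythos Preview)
Your proof is correct and is precisely the argument the paper has in mind when it says Theorem~\ref{thm:main3} follows ``as a direct application of Serre duality'': first identify $H^{n,r}_{w,loc}(\Omega,F)$ with $H^r(\Omega,\mathcal{K}_X\otimes\OO(F))$ via the fine resolution \eqref{eq:intro1} at $p=n$, kill it by cohomological $q$-completeness, and then feed the resulting (trivially Hausdorff) vanishing into Theorem~\ref{thm:main1}. One cosmetic remark: in the last step it is not quite ``non-degeneracy'' that forces $H^{0,n-r}_{s,cpt}(\Omega,F^*)=0$ (a pairing $0\times V\to\C$ is vacuously non-degenerate for any $V$), but rather the stronger assertion in Theorem~\ref{thm:main1} that each side is the topological dual of the other.
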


Note that $\Omega$ is cohomologically $q$-complete if it is $q$-complete by the Andreotti-Grauert vanishing theorem \cite{AG}.
So, Theorem \ref{thm:main3} allows to solve the $\dq_s$-equation with compact support for $(0,n-q)$-forms on $q$-complete spaces,
which is of particular interest for $1$-complete spaces, i.e., Stein spaces.

\medskip
\section{Hartogs' extension theorem}

Let us mention some applications. As an interesting consequence of Theorem \ref{thm:main3}, we obtain Hartogs'
extension theorem in its most general form. This version of the Hartogs' extension was first obtained by Merker-Porten \cite{MePo2}
and shortly thereafter also by Coltoiu-Ruppenthal \cite{CR}.
Merker and Porten gave an involved geometrical proof by using a finite number of
parameterized families of holomorphic discs and Morse-theoretical tools for the global topological control 
of monodromy, but no $\dq$-theory.
Shortly after that, Coltoiu and Ruppenthal were able to give a short $\dq$-theoretical proof by the Ehrenpreis-$\dq$-technique (cf. \cite{CR}).
This approach involves Hironaka's resolution of singularities which may be considered a very deep theorem.
In the present survey, we give a very short proof of the extension theorem by the Ehrenpreis-$\dq$-technique
without needing a resolution of singularities. We just use the vanishing result $H^{0,1}_{s,cpt}(X)=0$

\begin{thm}\label{thm:hartogs}
Let $X$ be a connected normal complex space of dimension $n\geq 2$ which is cohomologically $(n-1)$-complete.
Furthermore, let $D$ be a domain in $X$ and $K\subset D$ a compact subset such that $D\setminus K$ is connected.
Then each holomorphic function $f\in \mathcal{O}(D\setminus K)$ has a unique holomorphic extension to the whole set $D$.
\end{thm}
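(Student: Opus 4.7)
The strategy is the classical Ehrenpreis $\dq$-trick, executed with the $L^2$-operator $\dq_s$. Pick a cut-off function $\chi$ on $X$ (smooth in the Hermitian complex space sense) with $\chi\equiv 1$ in an open neighborhood of $K$ and $\supp\chi\Subset D$, chosen so that $D\setminus\supp\chi$ is a non-empty open subset of $D\setminus K$. Set
\begin{equation*}
\alpha \;:=\; -f\,\dq\chi,
\end{equation*}
extended by zero to $X$. Because $\supp\dq\chi$ is a compact subset of $D\setminus K$, on which $f$ is bounded, $\alpha$ is a bounded, compactly supported $(0,1)$-form, $\dq$-closed on $\Reg X$. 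Normality of $X$ forces $\Sing X$ to have complex codimension $\geq 2$; since $\alpha$ is bounded, a standard Riemann-type extension argument yields $\dq_w\alpha=0$ as a current on all of $X$, and Lemma \ref{lemma} then places $\alpha$ in $\mathcal{F}^{0,1}_{cpt}(X)$ with $\dq_s\alpha=0$.

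Because $X$ is cohomologically $(n-1)$-complete, Theorem \ref{thm:main3} (applied with the trivial bundle, $\Omega=X$, $q=n-1$ and $r=1$) gives $H^{0,1}_{s,cpt}(X)=0$. Thus there is $u\in \mathcal{F}^{0,0}_{cpt}(X)$ with $\dq_s u=\alpha$, and by elliptic regularity $u$ is smooth on $\Reg X$. Define
\begin{equation*}
F \;:=\; (1-\chi)\,f-u
\end{equation*}
on $D$. The summand $(1-\chi)f$ vanishes on a neighborhood of $K$, so it extends smoothly across $K$; consequently $F$ is a locally $L^2$ function on $D$ with $\dq_w F=\alpha-\alpha=0$, i.e.\ a weakly holomorphic function on $D$. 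Normality of $X$ then promotes $F$ to a genuine holomorphic function on $D$.

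It remains to show $F=f$ on $D\setminus K$. On $X\setminus\supp\chi$, where $\alpha=0$, $u$ is holomorphic. Since $X$ is $(n-1)$-complete it is non-compact, while $u$ has compact support, so $u$ vanishes on a non-empty open subset of every connected component of $X\setminus\supp\chi$ which is not relatively compact in $X$; the identity principle on such a component (a connected open subset of the irreducible normal space $X$, hence irreducible) gives $u\equiv 0$ on it. With $\supp\chi$ concentrated close to $K$, such an unbounded component meets $D\setminus\supp\chi$ in a non-empty open set $U\subset D\setminus K$; on $U$ one has $\chi=0$ and $u=0$, hence $F=f$. Connectedness of $D\setminus K$ together with the identity principle then force $F=f$ on all of $D\setminus K$, and uniqueness of the holomorphic extension across $K$ is automatic from the identity principle on the connected normal space $D$.

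\textbf{Main obstacle.} The last step is the delicate one: controlling the support of $u$ well enough to guarantee that $u$ vanishes on a non-empty open subset of $D\setminus K$. For $X=\C^n$ with $n\geq 2$ this is trivial because the complement of a compact set is connected; in the present singular, merely $(n-1)$-complete setting one must combine the connectedness hypothesis on $D\setminus K$ with a careful choice of $\chi$ and a connectedness-at-infinity property of $X$ stemming from $(n-1)$-completeness and $n\geq 2$.
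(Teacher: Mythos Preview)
Your argument is essentially the paper's proof: the same Ehrenpreis cut-off, the same appeal to Lemma~\ref{lemma} to land in the $\dq_s$-domain, the same vanishing $H^{0,1}_{s,cpt}(X)=0$ from Theorem~\ref{thm:main3}, and the same identity-theorem endgame. Two small corrections: in invoking Theorem~\ref{thm:main3} you need $r=n-1$ (so that $n-r=1$), not $r=1$; and the paper obtains non-compactness of $X$ not from $(n-1)$-completeness directly but from the companion vanishing $H^{0,0}_{s,cpt}(X)=0$ (also given by Theorem~\ref{thm:main3} with $r=n$), which cleanly kills the constant function and is the argument you should cite.
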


\begin{proof}
Let $f\in\OO(D\setminus K)$. Choose a cut-off function $\chi\in C^\infty_{cpt}(D)$ such that $\chi$ is identically $1$
in a neighborhood of $K$. Then $g:=(1-\chi)f$ is an extension of $f$, but unfortunately not holomorphic.
However, we can fix it by the $\dq$-strategy of Ehrenpreis. By Lemma \ref{lemma}, $g$ is in the domain of $\dq_s$
and $H^{0,1}_{s,cpt}(X)=0$ by Theorem \ref{thm:main3}. So, there exists a solution $h$ to the $\dq_s$-equation with compact support
$\dq_s h=\dq_s g$ and $F:=g - h$ is the desired extension  of $f$ to the whole of $D$. That can be seen by use of the identity theorem
and the fact that $X$ cannot be compact (because Theorem \ref{thm:main3} implies also that $H^{0,0}_{s,cpt}(X)=0$).
\end{proof}

\medskip
\section{Rational singularities}

Another, very interesting application of $L^2$-Serre duality is the following characterization of rational singularities.
Let $\pi: M \rightarrow X$ be a resolution of singularities and $\Omega\subset \subset X$ holomorphically convex.
Give $M$ any Hermitian metric. Then pullback of $L^2$-$(n,q)$-forms under $\pi$ induces an isomorphism
\begin{equation}\label{eq13}
\pi^*: H^{n,q}_{w,cpt}\big(\Omega\big) \overset{\cong}{\longrightarrow} 
H^{n,q}_{w,cpt}\big(\pi^{-1}(\Omega)\big) \cong H^q_{cpt}\big(\pi^{-1}(\Omega),\mathcal{K}_M\big)
\end{equation}
for all $0\leq q\leq n$ by use of Pardon--Stern \cite{PS1} and the Takegoshi vanishing theorem \cite{Ta} (see \cite{Rp11} for more details).
Now we can use the $L^2$-Serre duality, Theorem \ref{thm:main1}, and classical Serre duality on the smooth manifold $\pi^{-1}(\Omega)$
to deduce that push-forward of forms under $\pi$ induces another isomorphism
\begin{equation}\label{eq14}
\pi_*: H^{n-q}\big(\pi^{-1}(\Omega),\OO_M\big) \overset{\cong}{\longrightarrow}
H^{0,n-q}_{s,loc} \big(\Omega\big)
\end{equation}
for all $0\leq q\leq n$ (see \cite{Rp11}, Theorem 1.1). This shows that the obstructions to solving the $\dq_s$-equation locally for $(0,q)$-forms
can be expressed in terms of a resolution of singularities.
For the cohomology sheaves of the complex $(\mathcal{F}^{0,\bullet},\dq_s)$, we see that
$$\left(\mathcal{H}^q\big(\mathcal{F}^{0,\bullet},\dq_s\big)\right)_x \cong \left(R^q \pi_* \OO_M\right)_x$$
in any point $x\in X$ for all $q\geq 0$.
It follows that the functions in the kernel of $\dq_s$ are precisely the weakly holomorphic functions,
and for $p=0$ the complex \eqref{eq:intro2} is exact in a point $x\in X$ exactly if $x$ is a rational point: 

\begin{thm}[{\bf \cite{Rp11}, Theorem 1.3}]\label{thm:main4}
Let $X$ be a Hermitian complex space. Then the $L^2$-$\dq$-complex
\begin{eqnarray}\label{eq:exactnessM01}
0\rightarrow \OO_X \longrightarrow \mathcal{F}^{0,0} \overset{\dq_s}{\longrightarrow}
\mathcal{F}^{0,1} \overset{\dq_s}{\longrightarrow} \mathcal{F}^{0,2} 
\overset{\dq_s}{\longrightarrow} \mathcal{F}^{0,3} \overset{\dq_s}{\longrightarrow} ...
\end{eqnarray}
is exact in a point $x\in X$ if and only if $x$ is a rational point.

Hence, if $X$ has only rational singularities, then \eqref{eq:exactnessM01}
is a fine resolution of the structure sheaf $\OO_X$. 
\end{thm}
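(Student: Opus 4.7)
First, fix a point $x\in X$ and choose a fundamental system $\{\Omega_\nu\}$ of relatively compact Stein open neighborhoods of $x$; these are in particular holomorphically convex, so the isomorphism \eqref{eq14} applies and gives
\begin{equation*}
H^{0,r}_{s,loc}(\Omega_\nu) \;\cong\; H^r\bigl(\pi^{-1}(\Omega_\nu),\OO_M\bigr) \quad\text{for } 0\le r\le n,
\end{equation*}
naturally in $\Omega_\nu$. Passing to the direct limit over $\nu$ on both sides, using exactness of direct limits (so that inductive limits commute with cohomology of the complex $\mathcal{F}^{0,\bullet}$) together with the very definition of the higher direct image sheaf, one obtains the stalk-level identification
\begin{equation*}
\bigl(\mathcal{H}^r(\mathcal{F}^{0,\bullet},\dq_s)\bigr)_x \;\cong\; (R^r\pi_*\OO_M)_x \quad\text{for all } r\ge 0,
\end{equation*}
as indicated in the paragraph preceding the theorem.

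Next, I would translate exactness of the augmented complex \eqref{eq:exactnessM01} at $x$ degree by degree. For $r\ge 1$, exactness at $\mathcal{F}^{0,r}_x$ is equivalent to $\bigl(\mathcal{H}^r(\mathcal{F}^{0,\bullet},\dq_s)\bigr)_x=0$, i.e.\ to $(R^r\pi_*\OO_M)_x=0$. For $r=0$, recall from the discussion preceding the theorem that $\ker\dq_s\subset\mathcal{F}^{0,0}$ equals the sheaf $\widehat{\OO}_X$ of weakly holomorphic functions, while $\widehat{\OO}_X=\pi_*\OO_M$ by Riemann extension. Hence exactness at $\mathcal{F}^{0,0}_x$ with respect to the augmentation $\OO_X\hookrightarrow\mathcal{F}^{0,0}$ is equivalent to $\OO_{X,x}=(\pi_*\OO_M)_x$, that is, to $x$ being normal.

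By the definition of a rational point --- $(\pi_*\OO_M)_x=\OO_{X,x}$ together with $(R^r\pi_*\OO_M)_x=0$ for all $r\ge 1$ --- the two clauses above combine to say that \eqref{eq:exactnessM01} is exact at $x$ precisely when $x$ is rational. The second assertion then follows immediately because the sheaves $\mathcal{F}^{0,q}$ have already been observed to be fine, so pointwise exactness upgrades \eqref{eq:exactnessM01} to a fine resolution of $\OO_X$ on any space with only rational singularities.

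The main obstacle has already been handled upstream: the genuine analytic content is concentrated in the isomorphism \eqref{eq14}, which itself rests on $L^2$-Serre duality (Theorem \ref{thm:main1}), the Pardon--Stern fine resolution of $\mathcal{K}_X$, and Takegoshi's vanishing theorem. What remains, as above, is a formal passage to stalks and a matching with the sheaf-theoretic definition of rationality. The one genuine point of care is the simultaneous identification, in degree zero, of $\ker\dq_s$ with $\pi_*\OO_M$, without which the equivalence ``exactness at $\mathcal{F}^{0,0}_x$ $\Leftrightarrow$ normality of $x$'' (the first half of rationality) would not fall out of the same framework.
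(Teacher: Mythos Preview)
Your proposal is correct and follows essentially the same approach as the paper: the argument is precisely the stalk-level consequence of \eqref{eq14} that the paper sketches in the paragraph preceding the theorem, and you have merely made explicit the direct-limit passage and the degree-zero analysis (identifying $\ker\dq_s$ with $\widehat{\OO}_X=\pi_*\OO_M$ to capture normality). Nothing substantive differs from the paper's reasoning.
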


Recall that a point $x\in X$ is rational if it is a normal point and $\big(R^q \pi_* \OO_M\big)_x=0$ for all $q\geq 1$.
If $X$ has only rational singularities, then Theorem \ref{thm:main4} yields immediately further finiteness and vanishing results,
e.g. if $X$ is $q$-convex or $q$-complete.

\medskip
Let us point out also the following interesting fact.
Let $X$ be a Gorenstein space with canonical singularities.
By exactness of \eqref{eq:exactnessM01} and exactness of \eqref{eq:intro1} for $p=n$,
the non-degenerate $L^2$-Serre duality pairing
$$H^{0,q}_{s,loc}(\Omega) \times H^{n,n-q}_{w,cpt}(\Omega) \rightarrow \C\ ,\ ([\eta],[\omega]) \mapsto \int_{\Omega^*} \eta\wedge \omega,$$
is for $0\leq q\leq n$ then an explicit realization of Grothendieck duality after Ramis-Ruget \cite{RR},
$$\left( H^q(\Omega,\OO_X)\right)^* \cong H^{n-q}_{cpt}(\Omega,\omega_X),$$
given the cohomology groups under consideration are Hausdorff.
Here, $\omega_X$ denotes the Grothendieck dualizing sheaf which coincides with the Grauert-Riemenschneider canonical sheaf $\mathcal{K}_X$
as $X$ has canonical Gorenstein singularities.

\medskip
\section{$\mathcal{A}$-sheaf duality}

We conclude by mentioning another approach to analytic Serre duality on singular complex spaces
which is based on the so-called $\mathcal{A}_{0,q}$-sheaves introduced
by Andersson and Samuelsson in \cite{AS}. These are certain sheaves of $(0,q)$-currents 
on singular complex spaces which are smooth on the regular part of the variety
and such that the $\dq$-complex
\begin{equation}\label{eq2}
0 \rightarrow \mathcal{O}_X \hookrightarrow \mathcal{A}_{0,0} \overset{\dq}{\longrightarrow} 
\mathcal{A}_{0,1} \overset{\dq}{\longrightarrow} \mathcal{A}_{0,2} \longrightarrow ...
\end{equation}
is a fine resolution of the structure sheaf. The $\mathcal{A}$-sheaves are defined via Koppelman integral formulas
on singular complex spaces.

Analogously, in \cite{RSW2},
we introduced a $\dq$-complex of fine sheaves of $(n,q)$-currents (smooth on the regular part of the variety)
\begin{equation}\label{eq3}
0 \rightarrow \omega_X \hookrightarrow \mathcal{A}_{n,0} \overset{\dq}{\longrightarrow} 
\mathcal{A}_{n,1} \overset{\dq}{\longrightarrow} \mathcal{A}_{n,2} \longrightarrow ...
\end{equation}
where $X$ is of pure dimension $n$ and $\omega_X$ denotes the Grothendieck dualizing sheaf.
The complex \eqref{eq3} is exact only under some additional assumptions, e.g. if $X$ is Cohen-Macaulay.
We call $(\mathcal{A}_{n,\bullet},\dq)$ a dualizing Dolbeault complex for $\OO_X$ because
we obtain in \cite{RSW2} a non-degenerate topological pairing
\begin{equation}\label{eq4}
H^q\big(\mathcal{A}_{0,\bullet}(X), \bar{\partial}\big) \times
H^{n-q}_{cpt}\big(\mathcal{A}_{n,\bullet}(X), \bar{\partial}\big) \to \mathbb{C}, \quad
([\varphi]_{\bar{\partial}}, [\psi]_{\bar{\partial}}) \mapsto \int_X \varphi\wedge\psi,
\end{equation}
provided that $H^q(X,\OO_X) \cong H^q\big(\mathcal{A}_{0,\bullet}(X), \bar{\partial}\big)$ and 
$H^{q+1}(X,\OO_X) \cong H^{q+1}\big(\mathcal{A}_{0,\bullet}(X), \bar{\partial}\big)$ are 
Hausdorff topological spaces.

\bigskip
{\bf Acknowledgments.}
The author was supported by the Deutsche Forschungsgemeinschaft (DFG, German Research Foundation), 
grant RU 1474/2 within DFG's Emmy Noether Programme.

\bigskip

\end{document}